\documentclass[11pt]{amsart}

\usepackage{amsmath}
\usepackage{amssymb}
\usepackage{amscd}
\usepackage{amsthm}
\usepackage{hyperref}
\usepackage{colortbl}

\usepackage{mathtools}
\usepackage{extarrows}

\usepackage[all]{xy}

\usepackage{tikz-cd}

\newtheorem{thm}{Theorem}[section]
\newtheorem*{thm*}{Theorem}
\newtheorem{lemma}[thm]{Lemma}
\theoremstyle{remark}

\newtheorem*{rem}{Remark}

\theoremstyle{definition}
\numberwithin{equation}{section}

\allowdisplaybreaks[4]

\begin{document}

\newcommand{\Q}{{\mathbb Q}}
\newcommand{\Z}{{\mathbb Z}}

\newcommand{\Hom}{\mathop{\mathrm{Hom}}\nolimits}
\newcommand{\Ker}{\mathop{\mathrm{Ker}}\nolimits}
\newcommand{\im}{\mathop{\mathrm{Im}}\nolimits}

\title{Betti numbers of unordered Configuration spaces of the Torus}

\author{Christoph Schiessl}
\address{Departement Mathematik\\ETH Z\"urich\\R\"amistrasse 101\\8092 Z\"urich\\Switzerland}
\email{christoph.schiessl@math.ethz.ch}
\date{\today}

\thanks{The author was supported by the grant ERC-2012-AdG-320368-MCSK. Thanks to Gabriel Drummond-Cole, Javier Fres\'{a}n, Felix Janda, Dan Petersen, Johannes Schmitt, Junliang Shen, Qizheng Yin for very helpful discussions and especially to Rahul Pandharipande for his invaluable support.}
 
\begin{abstract} 
    Using a method of F\'{e}lix and Thomas we compute the Betti numbers of unordered configuration spaces of the torus. 
\end{abstract}

\maketitle

\section{Introduction}
Let $X$ be a topological space. We denote by \[ F_n(X) = \{ (x_1, \dots, x_n) \in X^n \colon x_i \neq x_j \text{ for } i \neq j \}\] the space of \emph{ordered configurations} of $n$ distinct points on $X$. The group ${\mathfrak{S}}_n$ acts freely on $F_n(X)$ by permuting the $n$ points. The quotient \[C_n(X) = F_n(X) / {\mathfrak {S}}_n\] is the space of \emph{unordered configurations} of $n$ points on $X$.

 In the context of representation stability, Church showed that for a connected, orientable manifold $M$ of finite type the rational cohomology groups $H^i(C_n(M), \Q)$ stabilise for $n>i$  \cite[Cor.\! 3]{church}. However, very few of these stable Betti numbers have been explicitly computed. F\'{e}lix and Thomas \cite{felix} showed that for a closed, orientable, nilpotent, even-dimensional manifold $M$, the rational Betti numbers of $C_n(M)$ are determined by the rational cohomology algebra $H^*(M, \Q)$. They constructed an explicit differential graded algebra that we use to compute the Betti numbers of the unordered configuration spaces of the torus $\Sigma_1=S^1 \times S^1$. 

\begin{thm*}
    Suppose $n \ge 2$. Then \[ \dim_{\Q} H^i(C_n(\Sigma_1), \Q) = \begin{cases} 
	\frac{n-2}{2} & i=n+1, \, n \text{ even} \\
	\frac{n+1}{2} & i=n+1, \, n \text { odd} \\
	\frac{3n-4}{2} & i=n, \, n \text{ even} \\
	\frac{3n-1}{2} & i=n, \, n  \text{ odd} \\
	2i-1 & 2 \le i < n \\ 
        2 &  i= 1 \\
        1 & i= 0.  \end{cases}\]
\end{thm*}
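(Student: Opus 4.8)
The plan is to make the F\'elix--Thomas differential graded algebra \cite{felix} completely explicit for $M=\Sigma_1$ and to compute its cohomology. Since $\Sigma_1$ is a closed orientable surface, $H^*(\Sigma_1,\Q)$ is the exterior algebra $\Lambda(\alpha,\beta)$ on two classes of degree $1$, with fundamental class $\alpha\beta$; and since $\Sigma_1$ is formal (it is an elliptic curve), the F\'elix--Thomas model of $C_n(\Sigma_1)$ may be taken to be the $\mathfrak S_n$-invariant subalgebra of the Cohen--Taylor--Kriz model of $F_n(\Sigma_1)$, namely
\[
E_n=\Bigl(\Lambda(\alpha_1,\beta_1,\dots,\alpha_n,\beta_n)\otimes\Lambda(G_{jk})_{1\le j<k\le n}\big/ R_n,\ \partial\Bigr),
\]
where $|\alpha_i|=|\beta_i|=|G_{jk}|=1$, the ideal $R_n$ is generated by the Arnold relations $G_{jk}G_{kl}+G_{kl}G_{lj}+G_{lj}G_{jk}$ and the relations $\pi_j^*(\gamma)\,G_{jk}=\pi_k^*(\gamma)\,G_{jk}$ for $\gamma\in H^*(\Sigma_1,\Q)$, and $\partial G_{jk}=\Delta_{jk}$ is the image, in the $j$-th and $k$-th tensor factors, of the diagonal class $\Delta=1\otimes\alpha\beta-\alpha\otimes\beta+\beta\otimes\alpha+\alpha\beta\otimes 1$. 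Because $\mathfrak S_n$ is finite and $\Q$ has characteristic zero, taking invariants is exact, so $H^i(C_n(\Sigma_1),\Q)\cong H^i(E_n^{\mathfrak S_n},\partial)$, and the statement becomes a finite-dimensional linear algebra problem that one must solve uniformly in $n$.

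The next step is to translate $E_n^{\mathfrak S_n}$ into a combinatorial complex of decorated graphs. The Arnold relations give a monomial basis of the $G$-part indexed by forests on $\{1,\dots,n\}$, and the relations $\pi_j^*=\pi_k^*$ let one slide each $\alpha$ and $\beta$ along edges, collapsing the decoration of every connected component to a single label in $\{1,\alpha,\beta,\alpha\beta\}$ (with $\alpha^2=\beta^2=0$ forcing at most one $\alpha$ and one $\beta$ per component, while isolated vertices keep an independent label). After passing to $\mathfrak S_n$-invariants, a spanning set of $E_n^{\mathfrak S_n}$ is given by isomorphism classes of such labelled forests on $n$ vertices, and $\partial$ becomes the signed sum over edges that deletes one edge and inserts $\Delta_{jk}$, redistributing a decoration onto the two resulting pieces. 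The one genuine subtlety is that $\alpha$, $\beta$ and all $G_{jk}$ have odd cohomological degree, so a labelled forest represents the zero class whenever it admits an automorphism acting by $-1$; this sign phenomenon is what will make the final answer depend on the parity of $n$ in the top two degrees. A warm-up visible already at this point: the top monomial $(\alpha\beta)_1\cdots(\alpha\beta)_n$, of degree $2n$, is the boundary of the forest with a single edge whose (two-vertex) component is labelled $\alpha\beta$, which explains why the cohomology collapses well below the dimension $2n$ of $C_n(\Sigma_1)$.

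To compute, I would filter $E_n^{\mathfrak S_n}$ (e.g.\ by the number of isolated points), or argue by induction on $n$, so that the computation decomposes over the connected ``molecules'': the contribution of the isolated points is the graded-symmetric algebra on $\widetilde H^*(\Sigma_1,\Q)$, and each tree shape contributes the homology of an explicit short complex recording how the diagonal class propagates through it. One expects most of these complexes to be acyclic, pinning the cohomology into the band in the statement; the stable values $2i-1$ for $2\le i<n$ (in agreement with Church's stabilization \cite{church}) should come from the low-complexity molecules --- decorated points, single edges, and short paths --- once the Arnold relations and the internal differentials are accounted for, and one can check that the resulting stable Poincar\'e series is $\tfrac{1+t^{3}}{(1-t)^{2}}$. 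The exceptional values in degrees $i=n$ and $i=n+1$ come from the ``fully connected'' contributions, where both the surviving differentials and the parity constraints above must be tracked precisely.

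The main obstacle is exactly this last step, and the vanishing of every Euler characteristic, $\chi(C_n(\Sigma_1))=\binom{\chi(\Sigma_1)}{n}=0$ for $n\ge 1$, shows that it is unavoidable: the Betti numbers cannot be read off from Euler characteristics, so one must determine the exact rank of $\partial$ in each bidegree of the invariant complex. The real work is therefore the equivariant bookkeeping for decorated forests --- deciding which labelled-forest classes survive the odd-degree sign relations, organizing the resulting subquotients by molecule type, and proving that after all the differentials only the cohomology in the theorem remains, with the even/odd dichotomy emerging correctly in degrees $n$ and $n+1$.
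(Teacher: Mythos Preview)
Your proposal is a plan, not a proof. You yourself say that ``the real work is therefore the equivariant bookkeeping for decorated forests'' and that one ``expects most of these complexes to be acyclic'' --- but none of that bookkeeping is actually carried out, no acyclicity is established, and the top-degree cases $i=n$ and $i=n+1$ are explicitly left open. A referee would not accept this as a proof of the theorem; the hard step is identified but not done.

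By contrast, the paper avoids the Kriz model and decorated forests entirely. It works directly with the F\'elix--Thomas dga $(\Omega_n,D)$, which for $\Sigma_1$ has only eight generators. The key moves are: (i) for $i<n$, setting $v_{ab}=1$ gives an isomorphism onto a complex $(\Theta,d)$ independent of $n$; (ii) an explicit change of variables $\varphi$ deforms $d$ to a differential $d_0$ in which only $w_{ab}$ is non-closed, after which $H^*(\Theta,d_0)\cong\Lambda\langle v_a,v_b,w_1,w_a,w_b\rangle$ with Poincar\'e series $(1+t^3)/(1-t)^2$; (iii) for $i>n+1$ a short argument with the bigrading forces every class to be a boundary; (iv) for $i=n+1$ one writes down an explicit basis of size $\lfloor n/2\rfloor$ or $\lceil n/2\rceil$ according to parity. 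Finally --- and this is a point on which your last paragraph is mistaken --- the paper \emph{does} use the Euler characteristic: once all Betti numbers except $\dim H^n$ are known, the single equation $\chi(C_n(\Sigma_1))=0$ determines $\dim H^n$. So the vanishing of $\chi$ is not an obstacle to be worked around but the tool that finishes the computation.

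If you want to pursue your graph-theoretic route, you would need to exhibit the filtration explicitly, prove the acyclicity you are hoping for, and carry out the top-degree analysis in full; as written, those steps are promissory.
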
 

Azam \cite{azam} determined the rational Betti numbers of configuration spaces of Riemann surfaces for $n= 2,3$ in any genus and for $n=4$ in genus~{1} by the Kriz model \cite{kriz}. Napolitano \cite{napolitano} computed the integral cohomology groups of $C_n(\Sigma_1)$ for $n \le 7$ using a cellular decomposition. Drummond-Cole and Knudsen announced computations of Betti numbers for unordered configuration spaces of surfaces for all genera by a different method \cite[p. 30]{knudsen}.
	
The theorem has been tested for all $n \le 20$ using the computer algebra system SAGE \cite{sage}.
\section{Conventions} 
We consider $n \ge 2$ as $C_1(X) \simeq X$. We will always work with cohomology/homology with $\Q$-coefficients and identify \[H^*(M, \Q) = \Hom_{\Q}(H_*(M, \Q), \Q)\]. The free $\Q$-vector space with basis  $x_1$, \dots, $x_n$ is denoted by $\langle x_1, \dots, x_n \rangle$.

For any differential graded commutative algebra $(A,d)$, we use the sign convention $xy = (-1)^{\deg x \deg y} yx$ and $d(xy) = d(x) y + (-1)^{\deg x} x d(y)$ for homogenous $x,y \in A$. We have the free graded commutative algebra $\Lambda(V)$ on any graded vector space $V$ with \[ \Lambda(V) = \text{Exterior algebra }( V^{\text{odd}}) \otimes \text{Symmetric algebra } (V^{\text{even}}).\]
\section{Construction of the algebra}
Let $M$ be a manifold. The cup product gives a map \[\cup \colon H^*(M) \otimes H^*(M) \to H^*(M),\] which dualizes to the diagonal comultiplication \[\Delta \colon H_*(M) \to H_*(M) \otimes H_*(M).\] Using a basis $e_i$ of $H^*(M)$ the map $\Delta$ is given by
\[\Delta(e_k^{*})= \sum_{i,j} (\text{coefficient of } e_k \text{ in } e_i \cup e_j) \  e_i^* \otimes e_j^*,\] where $e_i^*$ denotes the dual basis of $H_*(M)$.

Set $m= \dim(M)$. We take two shifted copies $V, W$ of the vector space $H_*(M)$ with (upper) grading
\begin{align*}
    V^{m-r} = H_r(M) & & W^{2m-1-r} = H_r(M).
\end{align*}
We endow the free graded algebra $\Omega = \Lambda (V \oplus W)$ with the unique differential $D$ of degree 1 such that \begin{align*} D_{|V}=0 & & D_{|W} \colon W \simeq H_*(M) \xrightarrow{\Delta} \Lambda ^2 H_*(M) \simeq \Lambda^2 V. \end{align*}
A lower grading \[ \Omega = \bigoplus_{n \ge 0}  \Omega_n \] can be defined by putting $V$ in degree 1 and $W$ in degree 2. Hence we have \[ \Omega_n = \bigoplus_{r+2s=n} \Lambda^r V \otimes \Lambda^s W .\]
The vectorspace  $ \Omega_n$ is also graded \[ \Omega_n = \bigoplus_{i \ge 0} \Omega^i_n\] by the upper grading inherited from $\Omega$. As $D(W) \subset \Lambda^2 V$, the differential $D$ respects the lower grading and $\Omega_n$ is a subcomplex of $(\Omega, D)$.

F\'{e}lix and Thomas showed that $(\Omega_n, D)$ is a model for the cohomology of $H^*(C_n(M), \Q)$.
\begin{thm} \label{felixthm}
    \cite[Th. A(2)]{felix} Let $M$ be an orientable, closed, nilpotent, even-dimensional manifold. There is an isomorphism of graded vector spaces \[ H^*(C_n(M), \Q) \simeq H^*(\Omega_n, D).\]
\end{thm}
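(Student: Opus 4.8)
The plan is to reduce the claim to a computation of $\mathfrak{S}_n$-invariants in a known rational model for the \emph{ordered} configuration space $F_n(M)$, and then to match the resulting invariant complex with $(\Omega_n, D)$ generator by generator.

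First I would invoke the Cohen--Taylor--Kriz model for $F_n(M)$. Write $A = H^*(M)$ for the cohomology algebra, equipped with its Poincar\'e duality pairing (available since $M$ is closed, orientable, of even dimension $m$). The model is the CDGA $\mathcal{F}(n)$ generated by labelled copies $a^{(i)}$ of classes $a \in A$ for $i \in \{1, \dots, n\}$, together with propagators $G_{ij}$ of degree $m-1$ for $i \ne j$, subject to $G_{ij} = (-1)^m G_{ji}$, the Arnold relation $G_{ij}G_{jk} + G_{jk}G_{ki} + G_{ki}G_{ij} = 0$, and $a^{(i)} G_{ij} = a^{(j)} G_{ij}$; the differential kills the $a^{(i)}$ and sends $G_{ij}$ to the diagonal class $\sum_\alpha \pm\, e_\alpha^{(i)} (e_\alpha^\vee)^{(j)}$. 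That $H^*(\mathcal{F}(n)) \cong H^*(F_n(M))$ is the classical input (via the Cohen--Taylor spectral sequence, or a Poincar\'e-duality CDGA model of the nilpotent manifold $M$); justifying it in the stated generality is itself substantial, but I would treat it as known.

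Next, $\mathfrak{S}_n$ acts freely on $F_n(M)$ with quotient $C_n(M)$, so transfer gives $H^*(C_n(M)) \cong H^*(F_n(M))^{\mathfrak{S}_n}$, the action being the one induced by permuting labels and propagators in $\mathcal{F}(n)$. Since we work over $\Q$, the averaging idempotent makes $(-)^{\mathfrak{S}_n}$ exact, so it commutes with cohomology and $H^*(C_n(M)) \cong H^*\big(\mathcal{F}(n)^{\mathfrak{S}_n}, d\big)$. The heart of the argument is then an isomorphism of cochain complexes $\mathcal{F}(n)^{\mathfrak{S}_n} \cong \Omega_n$. I would define it on symmetrized generators: a symmetric sum $\sum_i a^{(i)}$ maps, under the Poincar\'e identification $A = H^*(M) \cong V$, to the corresponding generator of $V$, and a symmetrized propagator $\sum_{i<j} a^{(i)} G_{ij}$ maps to the corresponding generator of $W$. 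The lower grading then records the $n$ labels as $r$ single points (factors of $V$, including those carrying the unit) and $s$ pairs (factors of $W$) with $r + 2s = n$, matching $\Omega_n = \bigoplus_{r+2s=n} \Lambda^r V \otimes \Lambda^s W$; and $d(a^{(i)}G_{ij}) = a^{(i)}\, dG_{ij}$, after using $a^{(i)}G_{ij} = a^{(j)}G_{ij}$, becomes exactly the splitting $D|_W \colon W \cong H_*(M) \xrightarrow{\Delta} \Lambda^2 V$.

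The main obstacle is this final identification, and it is where even-dimensionality is indispensable: it puts the propagators in odd degree $m-1$ (so they are exterior variables) and makes the sign $(-1)^m = +1$ in $G_{ij} = G_{ji}$, rendering the diagonal class symmetric and the symmetrized monomials above well defined. One must verify that, after symmetrization, the relation $a^{(i)}G_{ij}=a^{(j)}G_{ij}$ together with the Arnold relations collapse all ``connected'' configurations spanning three or more labels, leaving only products of single points and disjoint pairs, and that the displayed monomials form a $\Q$-basis of $\mathcal{F}(n)^{\mathfrak{S}_n}$ in every bidegree. Granting this combinatorial count, the map is a cohomological-degree-preserving isomorphism intertwining $d$ and $D$, and composing the three identifications yields $H^*(C_n(M), \Q) \cong H^*(\Omega_n, D)$, as asserted.
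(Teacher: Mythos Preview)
The paper does not prove this statement; it is quoted verbatim from F\'elix--Thomas and used as a black box. So there is no ``paper's own proof'' to compare against, and your write-up is an attempt to supply an argument where the paper offers none.

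As a sketch, your outline is reasonable and close in spirit to how such results are obtained: pass from $C_n(M)$ to $F_n(M)$ by transfer, use a CDGA model for $F_n(M)$, and identify the $\mathfrak{S}_n$-invariants with $(\Omega_n,D)$. Two points deserve flagging. First, the Cohen--Taylor/Kriz model computes $H^*(F_n(M),\Q)$ on the nose only under additional hypotheses (formality, or collapse of the Cohen--Taylor spectral sequence); for a general nilpotent even-dimensional manifold this is not automatic, and F\'elix--Thomas work instead with Sullivan models, where nilpotence is the natural hypothesis. Second, your ``main obstacle'' is genuinely the crux: showing that after symmetrization the Arnold relations kill every connected piece on three or more labels, so that $\mathcal{F}(n)^{\mathfrak{S}_n}$ has a basis indexed by partitions of $\{1,\dots,n\}$ into singletons and pairs, is a real combinatorial lemma (essentially equivalent to the statement that the $\mathfrak{S}_n$-invariants of the Arnold algebra are supported on graphs with components of size $\le 2$). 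You acknowledge both gaps rather than hide them, which is fine for a proposal, but a complete proof would need to close them---and at that point you would essentially be reproving the cited theorem.
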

 
\section{Configuration spaces of the torus}

Now we apply this theorem for the torus $\Sigma_1$. Its cohomology algebra is $H^*(\Sigma_1) = \langle 1, a, b, ab \rangle$ with $\deg(a) = \deg(b)=1$ and the relations $ab =- ba$, $a^2 = b^2 = 0$. As $\pi_1(\Sigma_1)=\Z^2$ is abelian and the higher homotopy groups of $\Sigma_1$ vanish, $\Sigma_1$ is a nilpotent space.
We introduce the graded vector spaces $V= \langle v_1, v_a, v_b, v_{ab} \rangle$ and $W= \langle w_1, w_a, w_b, w_{ab} \rangle$ with degrees \begin{align*} \deg v_1 &= 2 & \deg w_1 &= 3 \\ \deg v_a &= 1 & \deg w_a &= 2 \\ \deg v_b &= 1 & \deg w_b &= 2 \\ \deg v_{ab} &= 0 & \deg w_{ab} &= 1. \end{align*}
We look at the graded algebra $\Omega = \Lambda \langle v_1, v_a, v_b, v_{ab}, w_1, w_a, w_b, w_{ab} \rangle$ with the differential $D$ given by 
\begin{align*} 
    D(v_1) &= 0 & D(w_1) & = v_1^2 \\
    D(v_a) &= 0 & D(w_a) & = 2v_1v_a \\
    D(v_b) &= 0 & D(w_b) &=  2v_1v_b \\
    D(v_{ab}) &= 0 & D(w_{ab}) &= 2v_1 v_{ab}+ 2 v_{a}v_b.
\end{align*}
By Theorem \ref{felixthm} we have to compute the cohomology groups of the subcomplexes \[ \Omega_n = \bigoplus_{r+2s= n} \Lambda^r V \oplus \Lambda^s W.\] We will do this by embedding them into the algebra \[\Theta = \Lambda \langle v_1, v_a, v_b, w_1, w_a, w_b, w_{ab} \rangle,\] with differential $d$ given by:
\begin{align*}
    d(v_1) &= 0 & d(w_1) & = v_1^2 \\
    d(v_a) &= 0 & d(w_a) & = 2v_1v_a \\
    d(v_b) &= 0 & d(w_b) &=  2v_1v_b \\
    & & d(w_{ab}) &= 2v_1 + 2 v_{a}v_b.
\end{align*} All variables have the same grading as in $\Omega$; we only set $v_{ab}=1$. 
\begin{lemma} There is an isomorphism $H^i(\Omega_n, D) \simeq H^i(\Theta, d)$ for $i < n$.
\label{first}
\end{lemma}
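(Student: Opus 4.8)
The plan is to realise $\Theta$ as the ``stabilisation'' of the complexes $(\Omega_n, D)$ obtained by turning on the degree-zero variable $v_{ab}$, the comparison being effected by the algebra map that sets $v_{ab}=1$.

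First I would introduce the homomorphism of graded algebras $\phi\colon \Omega \to \Theta$ sending $v_{ab}\mapsto 1$ and fixing the seven remaining generators. Since $\Omega$ is free graded-commutative and $\deg v_{ab}=0$ this is well defined, and the only identity needed for $\phi$ to commute with the differentials is $\phi(Dw_{ab}) = \phi(2v_1 v_{ab} + 2 v_a v_b) = 2v_1 + 2 v_a v_b = d w_{ab}$; on the other generators the compatibility is tautological. Restricting $\phi$ to the subcomplex $(\Omega_n, D)$ yields a chain map $\phi_n\colon \Omega_n \to \Theta$ (its image need not be homogeneous for the lower grading, but $\Theta$ carries no lower grading, so this does not matter).

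Next I would establish two properties of $\phi_n$. \emph{Injectivity:} identifying $\Omega \simeq \Theta\otimes_\Q \Q[v_{ab}]$ one gets $\Ker\phi = (v_{ab}-1)\Omega$, so it suffices to check that a nonzero element $(v_{ab}-1)\omega$ is never homogeneous for the lower grading. Writing $\omega = \sum_m \omega_m$ ($\omega_m\in\Omega_m$, finitely many nonzero) and letting $m_{\min}\leq m_{\max}$ be the extreme degrees with $\omega_m\neq 0$, the lower-degree components of $(v_{ab}-1)\omega$ in degrees $m_{\min}$ and $m_{\max}+1$ are $-\omega_{m_{\min}}$ and $v_{ab}\,\omega_{m_{\max}}$, both nonzero (multiplication by $v_{ab}$ is injective on $\Omega$), and $m_{\min}<m_{\max}+1$; hence $\Ker\phi_n = \Omega_n\cap\Ker\phi = 0$. \emph{Surjectivity in degrees $i<n$:} a monomial $\mu$ spanning $\Theta^i$ involves only $v_1,v_a,v_b,w_1,w_a,w_b,w_{ab}$, and, regarded as a monomial of $\Omega$, has some lower degree $\ell_\mu$; since $\deg v_{ab}=0$, the element $v_{ab}^{\,n-\ell_\mu}\mu$ lies in $\Omega_n^i$ and maps to $\mu$, provided $n-\ell_\mu\geq 0$. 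Comparing the two gradings generator by generator shows $\ell_\mu \leq i+1$ — the lower and upper degrees of each generator differ by at most $1$, and the discrepancy occurs only for $w_{ab}$, which appears at most once — so $i<n$ forces $\ell_\mu\leq n$ and the needed exponent is non-negative. (For $i=n$ this can fail, e.g. $\mu = v_a v_b w_{ab}$, which is precisely why the statement is restricted to $i<n$.)

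Finally I would insert these into the short exact sequence of complexes
\[ 0 \longrightarrow \Omega_n \xrightarrow{\ \phi_n\ } \Theta \longrightarrow C \longrightarrow 0, \qquad C := \Theta/\phi_n(\Omega_n), \]
which is a sequence of complexes because $\phi_n(\Omega_n)$ is a subcomplex of $(\Theta,d)$. By the surjectivity statement $C^i = 0$ for $i<n$, so $H^i(C) = 0$ there, and the long exact cohomology sequence gives $H^i(\Omega_n, D) \xrightarrow{\ \sim\ } H^i(\Theta, d)$ for $i<n$ (using $H^{i-1}(C)=H^i(C)=0$). The one point requiring care is the two-grading bookkeeping behind the surjectivity range, i.e.\ verifying that the power of $v_{ab}$ needed to lift $\mu$ back into $\Omega_n$ is non-negative exactly when $i<n$; the remainder is formal.
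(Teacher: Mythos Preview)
Your argument is correct and follows essentially the same route as the paper: define the algebra map $\Omega_n\to\Theta$ sending $v_{ab}\mapsto 1$, check it is injective and surjective in upper degrees $<n$ via the inequality $\ell_\mu\le i+1$ coming from $w_{ab}^2=0$, and conclude. The only cosmetic differences are that you justify injectivity more carefully (the paper simply asserts it) and you package the final step as a long exact sequence with the cokernel $C$, whereas the paper observes directly that a chain map which is bijective in degrees $<n$ and injective in degree $n$ induces an isomorphism on $H^i$ for $i<n$; either conclusion is immediate.
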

\begin{proof}
The injective map \[\pi \colon \Omega_n \to \Theta, \ \ v_{ab} \mapsto 1\] respects the grading as $\deg v_{ab} =0$ and commutes with the differentials. Take a degree $i < n$ and any monomial \[\prod v_k^{e_k} \prod w_l^{f_l} \in \Theta^i\] of degree $i$. The only generators of $\Omega$ where the lower degree exceeds the upper one are $v_{ab}$ and $w_{ab}$. As $w_{ab}^2=0$ we see \[ \sum e_k + 2 \sum f_l \le i+1.\] So the monomial \[v_{ab}^{n- \sum e_k -2 \sum f_l} \prod v_k^{e_k} \prod w_l^{f_l}\] is in $\Omega_n$ and \[ \pi ( v_{ab}^{n- \sum e_k -2 \sum f_l} \prod v_k^{e_k} \prod w_l^{f_l} )= \prod v_k^{e_k} \prod w_l^{f_l}.\] Thus $\pi$ is also surjective in degree $i$.
Altogether, $\pi$ induces an isomorphism $H^i(\Omega_n, d) \simeq H^i(\Theta, d)$ for $i< n$.
\end{proof}

In order to compute the Betti numbers of $(\Theta, d)$ we compare $d$ with the simpler differential $d_0$ given by
\begin{align*}
    d_0(v_1) &= 0 & d_0(w_1) & = 0 \\
    d_0(v_a) &= 0 & d_0(w_a) & = 0 \\
    d_0(v_b) &= 0 & d_0(w_b) &=  0 \\
    & & d_0(w_{ab}) &= 2v_1 + 2 v_{a}v_b
\end{align*} 
   
\begin{lemma} There is an isomorphism $\varphi \colon (\Theta, d_0) \to (\Theta, d)$.
\label{def}
\end{lemma}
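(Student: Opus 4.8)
The plan is to build $\varphi$ as an automorphism of the underlying free graded-commutative algebra $\Theta=\Lambda\langle v_1,v_a,v_b,w_1,w_a,w_b,w_{ab}\rangle$ and then check that it intertwines the two differentials. Since $d_0$ and $d$ already agree on $v_1,v_a,v_b$ and on $w_{ab}$, the natural choice is to let $\varphi$ fix these five generators and to modify only $w_1,w_a,w_b$. For a generator $x$ with $d_0x=0$, the required identity $d(\varphi(x))=\varphi(d_0x)=0$ forces $\varphi(x)$ to be a $d$-cocycle of the same degree; so the task reduces to writing down, for each of $w_1,w_a,w_b$, a $d$-cocycle of the form $w_\bullet+(\text{correction})$, where the correction is built from $w_{ab}$ and the $v$'s and has the same upper degree as $w_\bullet$.

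The corrections come out of short Leibniz computations with $d$. One checks $d(w_{ab}v_a)=(2v_1+2v_av_b)v_a=2v_1v_a=d(w_a)$ and likewise $d(w_{ab}v_b)=2v_1v_b=d(w_b)$, so $\varphi(w_a):=w_a-w_{ab}v_a$ and $\varphi(w_b):=w_b-w_{ab}v_b$ are $d$-closed of degree $2$. For $w_1$ one needs a degree-$3$ element with $d$-image $v_1^2$; combining the two obvious candidates $w_{ab}v_1$ and $w_{ab}v_av_b$ gives $d\bigl(w_{ab}(v_1-v_av_b)\bigr)=(2v_1+2v_av_b)(v_1-v_av_b)=2v_1^2$, so one sets $\varphi(w_1):=w_1-\tfrac12 w_{ab}(v_1-v_av_b)$. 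Since each correction has the correct degree, extending $\varphi$ multiplicatively yields a degree-preserving algebra endomorphism of $\Theta$.

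It then remains to see that $\varphi$ is an isomorphism of differential graded algebras. Compatibility with the differentials is verified on generators, where it holds by the computations above, and propagates to all of $\Theta$ because both $d\circ\varphi$ and $\varphi\circ d_0$ satisfy the Leibniz rule relative to $\varphi$, i.e.\ $\delta(xy)=\delta(x)\varphi(y)+(-1)^{\deg x}\varphi(x)\delta(y)$ (using that $\varphi$ preserves degree). Bijectivity is immediate from the explicit inverse, which fixes $v_1,v_a,v_b,w_{ab}$ and sends $w_a\mapsto w_a+w_{ab}v_a$, $w_b\mapsto w_b+w_{ab}v_b$, $w_1\mapsto w_1+\tfrac12 w_{ab}(v_1-v_av_b)$; alternatively, $\varphi$ is unipotent for the filtration of $\Theta$ by $w_{ab}$-degree and hence invertible. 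I do not expect a real obstacle here: the only mild points of care are guessing the correction for $w_1$ and keeping the Koszul signs straight in the Leibniz identities. As a consequence one gets $H^i(\Omega_n,D)\simeq H^i(\Theta,d)\simeq H^i(\Theta,d_0)$ for $i<n$, reducing the computation to the much simpler complex $(\Theta,d_0)$.
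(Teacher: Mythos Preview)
Your proof is correct and follows the same strategy as the paper: build an explicit graded-algebra automorphism of $\Theta$ that fixes $v_1,v_a,v_b,w_{ab}$ and adjusts $w_1,w_a,w_b$ by degree-matching corrections so that their images become $d$-cocycles. The only cosmetic difference is the correction you choose for $w_1$ (you take $-\tfrac12\,w_{ab}(v_1-v_av_b)$, whereas the paper takes $-\tfrac12\,v_1w_{ab}+\tfrac12\,v_bw_a$; both work), and you spell out the bijectivity argument that the paper leaves implicit.
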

\begin{proof}
It can be explicitly given by
\begin{align*}
    \varphi(v_1) &= v_1 & \varphi(w_1) &= w_1 - \frac{1}{2} v_1 w_{ab}+\frac{1}{2} v_bw_a \\
    \varphi(v_a) &= v_a & \varphi(w_a) &= w_a + v_a w_{ab} \\
    \varphi(v_b) &= v_b & \varphi(w_b) &= w_b + v_b w_{ab} \\
    & &  \varphi(w_{ab}) &= w_{ab} 
\end{align*}
As $d(\varphi(w_1)) = d(\varphi(w_a)) = d(\varphi(w_b))=0$, the map $\varphi$ commutes with the differentials.
\end{proof}
\begin{lemma}
    The Betti numbers of $H^*(\Theta, d_0)$ are
    \[ \dim_{\Q} H^i(\Theta, d_0) = \begin{cases} 1 & i=0 \\ 2 &  i= 1 \\ 2i-1 & i \ge 2. \end{cases} \]
\label{betti}    
\end{lemma}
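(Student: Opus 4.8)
The plan is to exploit that $d_0$ is supported on the single generator $w_{ab}$, so that $(\Theta, d_0)$ is, up to a harmless scalar, the Koszul complex on one element. Write $A = \Lambda\langle v_a, v_b, w_1\rangle \otimes \Q[v_1, w_a, w_b]$, so that $\Theta = A \oplus A\,w_{ab}$ and $d_0$ vanishes identically on $A$. For $b \in A$ the Leibniz rule gives $d_0(b\,w_{ab}) = (-1)^{\deg b}\,2b(v_1 + v_a v_b)$, so up to sign and the scalar $2$ the differential is the map ``multiply by $\theta := v_1 + v_a v_b$'' from $A\,w_{ab}\cong A$ into $A$. Hence $\Ker d_0 = A \oplus \mathrm{Ann}_A(\theta)\,w_{ab}$ and $\im d_0 = \theta A \subseteq A$, and since $\deg \theta = 2$ we get $(\im d_0)^i = \theta A^{i-2}$.

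The crucial point, which I expect to be the only real content, is that $\theta$ is a non-zero-divisor in $A$. I would see this by viewing $A = C[v_1]$ as a polynomial ring in the even variable $v_1$ over $C := \Lambda\langle v_a, v_b, w_1\rangle \otimes \Q[w_a, w_b]$; then $\theta = v_1 + v_a v_b$ is monic of degree $1$ in $v_1$, hence a non-zero-divisor, and the substitution $v_1 \mapsto -v_a v_b$ (degree-preserving, as $\deg v_1 = \deg v_a v_b = 2$) identifies $A/\theta A$ with $C$ as graded vector spaces, the point being that $(v_a v_b)^2 = 0$ forces every power $v_1^k$ with $k \ge 2$ to die in the quotient. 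In particular $\mathrm{Ann}_A(\theta) = 0$, so $\Ker d_0 = A$ and
\[ H^i(\Theta, d_0) \;=\; A^i / \theta A^{i-2} \;=\; (A/\theta A)^i \;\cong\; C^i . \]

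It remains to compute the Poincaré series of $C$. The exterior generators $v_a, v_b$ (degree $1$) and $w_1$ (degree $3$) together with the polynomial generators $w_a, w_b$ (degree $2$) give
\[ P_C(t) \;=\; \frac{(1+t)^2(1+t^3)}{(1-t^2)^2} \;=\; \frac{1+t^3}{(1-t)^2} \;=\; (1+t^3)\sum_{k\ge 0}(k+1)\,t^k , \]
whose coefficient of $t^i$ equals $1, 2, 3$ for $i = 0, 1, 2$ and $(i+1) + (i-2) = 2i-1$ for $i \ge 3$; since $2i-1 = 3$ when $i = 2$, this is exactly the asserted list of dimensions. The only places needing a little care are the sign bookkeeping in $d_0(b\,w_{ab})$ and the verification that $v_1 \mapsto -v_a v_b$ really induces a graded isomorphism $A/\theta A \cong C$; neither is a serious obstacle, and the whole argument should be short. (As a sanity check one can verify $H^0, H^1, H^2$ by hand directly from the definition of $d_0$ and obtain dimensions $1, 2, 3$.)
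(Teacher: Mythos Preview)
Your proposal is correct and follows essentially the same route as the paper: decompose $\Theta = T \oplus T\,w_{ab}$ with $T = \Lambda\langle v_1, v_a, v_b, w_1, w_a, w_b\rangle$, observe that $d_0$ reduces to multiplication by $v_1 + v_a v_b$, use that this element is a non-zero-divisor to get $H^*(\Theta,d_0)\cong T/(v_1+v_av_b)\cong \Lambda\langle v_a,v_b,w_1,w_a,w_b\rangle$, and read off the Poincar\'e series. The only difference is presentational: you spell out the non-zero-divisor argument via the monic-in-$v_1$ trick and the substitution $v_1\mapsto -v_av_b$, whereas the paper simply asserts it and passes directly to $T/(v_1)$.
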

    \begin{proof}
        Denote $T = \Lambda \langle v_1, v_a, v_b, w_1, w_a, w_b \rangle$. Then we have $\Theta = T \oplus T w_{ab}$. Observe that $d_0|T =0$. Take homogenous $x,y \in T$. We compute \[ d_0(x+yw_{ab}) = d_0(x) + d_0(y) w_{ab} \pm y d_0(w_{ab}) = \pm 2 y (v_1+v_av_b).\]  As  $v_1$ has even degree, $v_1 + v_av_b$ is not a zero-divisor. So we know that $\Ker(d_0) = T$ and \[H^*(\Theta, d_0)=T/(v_1+v_av_b) \simeq T/(v_1) \simeq \Lambda \langle v_a, v_b, w_1, w_a, w_b \rangle .\]
        The Poincar\'{e} series of $\Lambda \langle v_a, v_b, w_1, w_a, w_b \rangle$ is \[ \frac{(1+t^{\deg v_a})(1+t^{\deg v_b})(1+t^{\deg w_1})}{(1-t^{\deg w_a})(1-t^{\deg w_b})}  = \frac{(1+t)^2(1+t^3)}{(1-t^2)^2}   =  \frac{1+t^3}{(1-t)^2}, \] which expands to  \[1+2t+3t^2+5t^3+7t^4+\dots+(2i-1)t^i+ \cdots . \qedhere \]
\end{proof}
Combining Lemmas \ref{first}, \ref{def} and \ref{betti} we have computed $\dim_\mathbb{Q} H^i(\Omega_n)$ for $i<n$.
\begin{rem}
We consider the morphism 
\[ p \colon \Omega_n \to \Lambda \langle v_a, v_b, w_1, w_a, w_b, w_{ab} \rangle, \, v_{ab} \mapsto 1, \, v_1 \mapsto - v_a v_b .\] The above proof shows that for any $x \in \im D$ necessarily $p(x)=0$.   
\end{rem}

\begin{lemma}
We have \begin{align*} \dim_{\mathbb{Q}} H^{n+1}(\Omega_n) = \begin{cases} \frac{n-2}{2} & n \text{ even} \\ \frac{n+1}{2} & n \text{ odd} \end{cases} & & \dim_{\mathbb{Q}} H^{i}(\Omega_n)=0 \text{ for } i>n+1. \end{align*}
\end{lemma}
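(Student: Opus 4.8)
The plan rests on a single observation about the generator $v_1$ and the relation $D(w_1)=v_1^2$:

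\emph{If $z\in\Omega_n$ is a cocycle whose component $z_0$ not involving $w_1$ is divisible by $v_1^2$, then $z$ is a coboundary.}

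To prove it, write $z=z_0+w_1z_1$ with $z_0,z_1$ in the subalgebra $\Omega^{(0)}$ generated by all generators except $w_1$. Since $w_1$ occurs in no $D(\text{generator})$, $\Omega^{(0)}$ is a sub-DGA and $\Omega=\Omega^{(0)}\oplus w_1\Omega^{(0)}$ as $\Omega^{(0)}$-modules. Comparing $w_1$-components in
\[ 0=Dz=Dz_0+v_1^2z_1-w_1Dz_1 \]
gives $Dz_1=0$ and $Dz_0=-v_1^2z_1$. Writing $z_0=v_1^2y_0$ with $y_0\in\Omega^{(0)}$ (possible by hypothesis; $y_0$ is unique since the even generator $v_1$ is a non-zero-divisor) we get $v_1^2Dy_0=-v_1^2z_1$, hence $Dy_0=-z_1$ and therefore $D(w_1y_0)=v_1^2y_0-w_1Dy_0=z_0+w_1z_1=z$.

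From this I would first read off the vanishing. For a monomial of $\Omega_n^i$, with $a$ the exponent of $v_1$, $b$ that of $v_{ab}$, and $\eta_1,\eta_{ab}\in\{0,1\}$ recording the presence of $w_1$ and $w_{ab}$, a short computation with the bigradings gives $i-n=a-b+\eta_1-\eta_{ab}$; hence for $i\ge n+2$ every monomial of $\Omega_n^i$ not involving $w_1$ has $a\ge i-n\ge 2$. Thus the $w_1$-free part of any cocycle in $\Omega_n^i$ with $i\ge n+2$ is divisible by $v_1^2$, and the observation yields $H^i(\Omega_n)=0$ for all $i>n+1$.

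Second, I would turn the observation into a recursion. The ideal $(v_1^2)\subset\Omega$ is $D$-stable, and multiplication by $v_1^2$ is an isomorphism of complexes from $\Omega_{n-2}$ (upper grading shifted by $4$) onto $(v_1^2)\cap\Omega_n$; moreover the induced maps $v_1^2\colon H^*(\Omega_{n-2})\to H^*(\Omega_n)$ vanish, because the $w_1$-free part of $v_1^2y$ is $v_1^2$ times that of $y$. Hence the long exact sequence attached to $0\to(v_1^2)\to\Omega\to\overline\Omega\to0$, with $\overline\Omega=\Omega/(v_1^2)$, collapses to short exact sequences $0\to H^i(\Omega_n)\to H^i(\overline\Omega_n)\to H^{i-3}(\Omega_{n-2})\to0$, so
\[ \dim_\Q H^i(\Omega_n)=\dim_\Q H^i(\overline\Omega_n)-\dim_\Q H^{i-3}(\Omega_{n-2}). \]
Applying this with $i=n+1$ and then once more to the term $H^{n-2}(\Omega_{n-2})$ — after which the cohomological index has fallen below the lower degree, where $\dim H^{j}(\Omega_m)=2j-1$ is already known — expresses $\dim H^{n+1}(\Omega_n)$ through $\dim H^{n+1}(\overline\Omega_n)$, $\dim H^{n-2}(\overline\Omega_{n-2})$ and elementary quantities, with the finitely many small $n$ where this bookkeeping degenerates checked directly.

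It remains to compute $H^*(\overline\Omega)$, which is far easier since in $\overline\Omega$ the generator $v_1$ squares to zero and $w_1$ is $\overline D$-closed: $H^*(\overline\Omega)=H^*(A)\otimes\Lambda\langle w_1\rangle$ with $A=\Lambda\langle v_1,v_a,v_b,v_{ab},w_a,w_b,w_{ab}\rangle/(v_1^2)$, and $A$ is the mapping cone of multiplication by $2v_1v_{ab}+2v_av_b$ on $\Q[v_{ab}]\otimes C$, where $C=\Lambda\langle v_1,v_a,v_b,w_a,w_b\rangle/(v_1^2)$ with $\overline Dw_a=2v_1v_a$, $\overline Dw_b=2v_1v_b$. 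The cohomology of $C$ is written out explicitly — the only nonzero differentials carry $w_a^cw_b^d$, $v_aw_a^cw_b^d$ and $v_bw_a^cw_b^d$ to explicit $v_1$-multiples — and threading the two mapping-cone long exact sequences with the bigrading intact yields a closed formula for $\dim H^i(\overline\Omega_m)$. Feeding it into the recursion and simplifying produces the stated numbers, the two parity cases arising from the floors that appear when counting monomials $w_a^cw_b^d$ of prescribed degree. I expect the bulk of the work to lie in this last block: propagating the bigraded data through the cone sequences for $\overline\Omega$ and carrying out the resulting summation cleanly enough to recognise the two closed forms.
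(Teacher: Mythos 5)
Your key observation --- a cocycle whose $w_1$-free component is divisible by $v_1^2$ is exact, via $z=D(w_1y_0)$ --- is correct and is precisely the mechanism of the paper's proof, where a cocycle in upper degree $>n$ is written $x(f)=v_1f-w_1D(f)/v_1$ and shown to equal $D(w_1h)$ whenever $f=v_1h$. Your bigrading identity $i-n=a-b+\eta_1-\eta_{ab}$ checks out, so your argument that $H^i(\Omega_n)=0$ for $i>n+1$ is complete and essentially identical to the paper's.

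For $i=n+1$ you take a genuinely different route, and this is where the proposal falls short of a proof. The structural part is sound: $(v_1^2)\cap\Omega_n=v_1^2\Omega_{n-2}$, multiplication by $v_1^2$ is injective and induces the zero map on cohomology by your observation, so the long exact sequence of $0\to v_1^2\Omega_{n-2}\to\Omega_n\to\overline{\Omega}_n\to 0$ does collapse to $\dim_\Q H^i(\Omega_n)=\dim_\Q H^i(\overline{\Omega}_n)-\dim_\Q H^{i-3}(\Omega_{n-2})$, and iterating twice legitimately reduces everything to the range $i<n$ already settled by Lemmas \ref{first}--\ref{betti}. But the lemma's actual content is the two numbers, and those hinge on $\dim_\Q H^{n+1}(\overline{\Omega}_n)$ and $\dim_\Q H^{n-2}(\overline{\Omega}_{n-2})$, which you never compute: the K\"unneth splitting off of $w_1$ and the two mapping-cone sequences are only a plan, and the bigraded bookkeeping you defer (``threading the long exact sequences with the bigrading intact'') is exactly the nontrivial step. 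As written, the proposal establishes the vanishing half of the lemma but not the dimension formulas. The paper's proof is more pedestrian and shorter: since $f$ divisible by $v_{ab}$, $w_{ab}$ or $v_1$ gives a boundary, the relations $D(w_a^{n_1+1}w_b^{n_2+1})=\cdots$ reduce every class to a combination of $x(b)$ for $b$ in the explicit finite sets $B_{\text{odd}}$ or $B_{\text{even}}$, and linear independence is detected by the projection $p$ of the Remark, which annihilates $\im D$; counting those monomials gives the two parity cases immediately. If you want to salvage your route, you must actually carry out the computation of $H^*(\overline{\Omega}_m)$ in the bidegrees $(n+1,n)$ and $(m,m)$ and verify the closed forms, including the small-$n$ cases where the recursion leaves the stable range.
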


\begin{proof} We denote $\Theta' = \Lambda \langle v_1, v_a, v_b, v_{ab}, w_a, w_b, w_{ab} \rangle$.
The only generators with upper grading exceeding the lower grading are $v_1$ and $w_1$. Hence any $x \in \Omega_n^i$ with $i>n$ can be written as $ x= v_1 f + w_1 g$ where $f, g \in \Theta'$. We compute \[ D(x) =v_1 D(f) + v_1^2 g - w_1 D(g).\] As $D(\Theta') \subset \Theta'$ we see that $D(x) =0$ implies $D(g)=0$. So $x \in \Ker D$ if and only if $D(f) = - v_1 g$. Therefore any $x \in \Ker D$ is of the form \[x(f) = v_1 f  - w_1 \frac{D(f)}{v_1}\] with $f \in \Theta'$ such that $v_1 | D(f)$.

We will now discuss when  the cycles $x(f)$ are a boundary. 
If $f = v_1h$ then \[ D(w_1h) = v_1^2h - w_1 D(h) = v_1 f - w_1 \frac{D(f)}{v_1}= x(f).\] For any $x(f) \in \Omega_n^i$ with $i> n+1$ we know that $f$ has to be divisible by $v_1$. Hence $H^i(\Omega_n)=0$ for $i> n+1$.

Now we look at the case $i=n+1$. If $v_{ab} | f$ or $w_{ab} | f$ then $v_1| f$ for degree reasons. 

We consider the sets \[ B_{\text{odd}} =  \{ w_a^{n_1} w_b^{n_2} \mid  2n_1+2n_2+1= n; \, n_1, n_2 \ge 0 \} \] for odd degree $n$ and 
\[ B_{\text{even}} = \{ v_b w_a w_a^{n_1} w_b^{n_2} \mid 2n_1 +2n_2 +4=n; \, n_1, n_2 \ge 0  \} \] for even $n$.

If $v_1 f = D(h)$ then $D(f) = 0$ and hence we get $x(f) = D(h) \in \im(D)$.
Using the relations 
\begin{align*} 
D( v_b w_a^{n_1+1} w_b^{n_2}) & = - 2 (n_1+1) v_1 v_a v_b w_a^{n_1} w_b^{n_2} \\ 
D(w_a^{n_1+1}) &= 2(n_1+1) v_1 v_a w_a^{n_1} \\  D(w_b^{n_1+1}) & =2 (n_1+1) v_1 v_b w_b^{n_1} \\  D( w_a^{n_1+1} w_b^{n_2+1} ) & = 2(n_1+1) v_1 v_a w_a^{n_1} w_b^{n_2+1} + 2 (n_2+1) v_1 v_b w_a^{n_1+1} w_b^{n_2} \end{align*} we conclude that the set $\{ x(b) | b \in B_{\text{even}} \}$ resp. $\{ x(b) | b \in B_{\text{odd}}\}$ is a generating system of $H^{n+1}(\Omega_n)$ for even resp. odd $n$.

By applying $p$ we see that no non-trivial linear combinations of these generating sets are boundaries.
Hence we found an explicit basis of $H^{n+1}(\Omega_n)$.
\end{proof}

\begin{lemma}
We have
\[ \dim_{\mathbb{Q}} H^{n}(\Omega_n) =  \begin{cases} \frac{3n-4}{2} & n \text{ even } \\ \frac{3n-1}{2} &  n \text{ odd} \end{cases}. \]
\end{lemma}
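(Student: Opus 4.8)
The plan is to pin down $\dim_{\mathbb{Q}} H^n(\Omega_n)$ by an Euler characteristic computation, since Lemmas \ref{first}, \ref{def} and \ref{betti} together with the two preceding lemmas already determine $\dim_{\mathbb{Q}} H^i(\Omega_n)$ for every $i \neq n$. Concretely, $\Omega_n$ is a finite-dimensional complex (a monomial of lower degree $n$ has at most $n$ factors), so
\[ \chi(\Omega_n) := \sum_i (-1)^i \dim_{\mathbb{Q}} \Omega_n^i = \sum_i (-1)^i \dim_{\mathbb{Q}} H^i(\Omega_n), \]
and it remains to evaluate the left-hand side and solve for the single unknown term $\dim_{\mathbb{Q}} H^n(\Omega_n)$.

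The first step is to compute $\chi(\Omega_n)$. I would write down the bigraded Poincar\'e series of $\Omega$, with $s$ recording the lower grading and $t$ the upper grading, as a product of one factor per generator:
\[ P(s,t) = \frac{(1+st)^2\,(1+s^2t^3)\,(1+s^2t)}{(1-s)\,(1-st^2)\,(1-s^2t^2)^2}, \]
where the odd generators $v_a, v_b, w_1, w_{ab}$ contribute the numerator factors and the even generators $v_{ab}, v_1, w_a, w_b$ the denominator factors. Setting $t = -1$ collapses both numerator and denominator to $(1-s)^2(1-s^2)^2$, so $\sum_{n \ge 0}\chi(\Omega_n)\, s^n = P(s,-1) = 1$; hence $\chi(\Omega_n) = 0$ for all $n \ge 1$. (As an independent check one may note that $C_n(\Sigma_1)$ is an orientable $2n$-manifold, so $\chi = \chi_c$ there, and $\chi_c(C_n(\Sigma_1)) = \binom{\chi_c(\Sigma_1)}{n} = 0$.)

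The last step is pure bookkeeping. Substituting $\dim_{\mathbb{Q}} H^0 = 1$, $\dim_{\mathbb{Q}} H^1 = 2$, $\dim_{\mathbb{Q}} H^i = 2i-1$ for $2 \le i \le n-1$, $\dim_{\mathbb{Q}} H^i = 0$ for $i > n+1$, and the value of $\dim_{\mathbb{Q}} H^{n+1}(\Omega_n)$ from the preceding lemma into $0 = \sum_i (-1)^i \dim_{\mathbb{Q}} H^i(\Omega_n)$, the partial alternating sum $1 - 2 + \sum_{i=2}^{n-1}(-1)^i(2i-1)$ telescopes in pairs to $1-n$ for even $n$ and to $n-1$ for odd $n$; combining this with the known $\dim_{\mathbb{Q}} H^{n+1}(\Omega_n)$ and isolating $\dim_{\mathbb{Q}} H^n(\Omega_n)$ gives $\tfrac{3n-4}{2}$ for even $n$ and $\tfrac{3n-1}{2}$ for odd $n$. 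The only genuine obstacle is keeping the two parity cases straight, so I would verify the arithmetic against the small cases $n=2$ (Betti numbers $1,2,1,0$) and $n=3$ (Betti numbers $1,2,3,4,2$).
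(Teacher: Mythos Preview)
Your proof is correct and follows the same Euler-characteristic strategy as the paper: determine $\dim_{\mathbb{Q}} H^n(\Omega_n)$ as the unique unknown in $\chi(\Omega_n)=0$. The only difference is that the paper obtains $\chi(\Omega_n)=0$ from the free action of the torus on $C_n(\Sigma_1)$, whereas you derive it purely algebraically from the bigraded Poincar\'e series of $\Omega$ (which is a self-contained alternative that avoids invoking the topological model at this step).
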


\begin{proof}
As the torus acts freely on $C_n(\Sigma_1)$ we have  $\chi(\Omega_n) =0$. Using the above computation of $\dim_{\mathbb{Q}} H^{n+1}(\Omega_n)$ and 
\[ \sum_{i=0}^{n-1} \dim_{\mathbb{Q}} H^i(\Omega_n) = 1-2+3+ \dots + (-1)^{n-1} (2n-3) = (-1)^{n-1} (n-1) \] we can reconstruct the only missing Betti number $\dim_{\mathbb{Q}} H^{n}(\Omega_n)$.
\end{proof}

Combining all lemmas, we have computed $\dim_{\Q} H^i(C_n(\Sigma_1), \Q)$ for all $i$. We reproduce exactly the stability result \[\dim_{\Q} H^i(C_{n+1}(\Sigma_1), \Q) = \dim_{\Q} H^i(C_n(\Sigma_1), \Q)\] for $n >i$ of Church \cite[Cor. 3]{church}.

\begin{rem} Let $d \ge 1$. 
    With the same method one immediately finds for $n \ge 3$ 
    \[ \dim_{\Q} H^i(C_n(S^{2d}), \Q)= \begin{cases} 1 & \text{ for } i=0, \, 4d-1 \\ 0 & \text{ otherwise}, \end{cases} \] which has also been computed by \cite{randal}, \cite{salvatore}.  
\end{rem}
\begin{rem}
    It seems that our method does not work for surfaces of genus $g>1$ because the differential can not be deformed as in Lemma \ref{def}.
\end{rem}

\bibliographystyle{alpha}
\bibliography{torus.bib}
  
\end{document}